\newtheorem{theorem}{Theorem}[section]
\newtheorem{lemma}{Lemma}[section]
\newtheorem{claim}{Claim}[section]
\newcommand{\Chi}{\protect\raisebox{2pt}{$\chi$}}
\DeclarePairedDelimiter{\ceil}{\lceil}{\rceil}
\begin{document}

\thispagestyle{empty}

\title{On the Chromatic Number of $\mathbb{R}^n$ for Small Values of $n$}

\author{
Geoffrey Exoo \\
Department of Mathematics and Computer Science \\
Indiana State University \\
Terre Haute, IN 47809 \\
ge@cs.indstate.edu \\
\and
Dan Ismailescu \\
Mathematics Department \\
Hofstra University \\
Hempstead, NY 11549  \\
dan.p.ismailescu@hofstra.edu \\
}
\maketitle

\begin{abstract}
The lower bound for the chromatic number of $\mathbb{R}^n$ is
improved for $n = 6, 7, 10, 11, 12, 13 \mbox{ and } 14$.
\end{abstract}

\section{\bf Introduction.}

The chromatic number of $n$-dimensional Euclidean space,
denoted $\Chi(\mathbb{R}^n)$, is the minimum number of colors that can
be assigned to the points of $\mathbb{R}^n$ so that
no two points at distance one receive the same color.
In this note, we establish new lower bounds for $\Chi(\mathbb{R}^n)$ for several
small values of $n$.

In \cite{kr9}, a table of lower bounds for the $\Chi(\mathbb{R}^n)$ was given.
Besides the new bounds given in that paper, we are aware of only one
other improvement \cite{eik}.  Based on this improvements, we give a modified
table below.  The table also indicates the new bounds given here.

\begin{table}[!ht]
\centering
\begin{tabular}{|r|rl|r|} \hline
n \hspace{1mm} & \multicolumn{2}{|c|}{known bound} & new bound \\ \hline
 2 \hspace{1mm} & \hspace{2mm}   4 & \cite{moser} & \\
 3 \hspace{1mm} & \hspace{2mm}   6 & \cite{nechushtan} & \\
 4 \hspace{1mm} & \hspace{2mm}   9 & \cite{eik} & \\
 5 \hspace{1mm} & \hspace{2mm}   9 & \cite{cantwell} & \\
 6 \hspace{1mm} & \hspace{2mm}  11 & \cite{cibulka} & 12 \\
 7 \hspace{1mm} & \hspace{2mm}  15 & \cite{borsukprob} & 16 \\
 8 \hspace{1mm} & \hspace{2mm}  16 & \cite{szekely} & \\
 9 \hspace{1mm} & \hspace{2mm}  21 & \cite{kr9} & \\
10 \hspace{1mm} & \hspace{2mm}  23 & \cite{kr9} & 26 \\
11 \hspace{1mm} & \hspace{2mm}  24 & \cite{kr9} & 26 \\
12 \hspace{1mm} & \hspace{2mm}  24 & \cite{larmanrogers} & 36 \\
13 \hspace{1mm} & \hspace{2mm}  31 & \cite{szekely} & 36 \\
14 \hspace{1mm} & \hspace{2mm}  35 & \cite{szekely} & 36 \\
15 \hspace{1mm} & \hspace{2mm}  37 & \cite{szekely} & \\ \hline
\end{tabular}
\caption{Lower bounds on $\Chi(\mathbb{R}^n)$ for small $n$.}
\end{table}

\section{\bf $\chi(\mathbb{R}^6)$}

We construct a graph $G_{175}$ of order $175$ with chromatic number $12$.
The vertices of the graph are a set of points in $\mathbb{R}^6$ 
generated by $11$ special points.  The coordinates of each of these
special points is permuted in all $6! = 720$ possible ways to obtain
the full set of $175$ vertices.  The graph will be constructed as an
$r$-distance graph for $r = \sqrt{8}$.  The coordinates
for each point can be divided by $r$ to obtain a unit distance
graph.

The following table lists the aforementioned $11$ special points along with
the number of distinct points generated by permuting their coordinates.
For each of these $11$ points, $v_i$, let $V_i$ denote the set of points
obtained by permuting the coordinates, and let $n_i = | V_i |$, as shown
in the table.

\vspace{5mm}

\begin{table}[!ht]
\centering
\begin{tabular}{|r|rrrrrr|r|} \hline
Point & \multicolumn{6}{|c|}{Coordinates} & $n_i$ \\ \hline
$v_{1}$ & $0$  & $0$  & $0$  & $0$  & $0$  & $0$ & 1 \\
$v_{2}$ & $2$  & $2$   & $0$  & $0$  & $0$  & $0$ & 15 \\
$v_{3}$ & $2$  & $2$   & $2$  & $2$  & $0$  & $0$ & 15 \\
$v_{4}$ & $\sqrt{3}$ & $1$   & $1$  & $1$  & $1$  & $1$ & 6 \\
$v_{5}$ & $\sqrt{3}$ & $1$   & $1$  & $1$  & $1$  & $-1$ & 30 \\
$v_{6}$ & $-\sqrt{3}$ & $1$   & $1$  & $1$  & $1$  & $1$ & 6 \\
$v_{7}$ & $-\sqrt{3}$ & $1$   & $1$  & $1$  & $1$  & $-1$ & 30 \\
$v_{8}$ & $2+\sqrt{3}$ & $1$   & $1$  & $1$  & $1$  & $1$ & 6 \\
$v_{9}$ & $2+\sqrt{3}$ & $1$   & $1$  & $1$  & $1$  & $-1$ & 30 \\
$v_{10}$ & $2-\sqrt{3}$ & $1$   & $1$  & $1$  & $1$  & $1$ & 6 \\
$v_{11}$ & $2-\sqrt{3}$ & $1$   & $1$  & $1$  & $1$  & $-1$ & 30 \\ \hline
 & \hspace{1cm} & \hspace{1cm} & \hspace{1cm} & \hspace{1cm} & \hspace{1cm} & \hspace{1cm} & 175 \\ \hline
\end{tabular}
\caption{The $11$ points that generate the vertices of $G_{175}$.}
\end{table}

\vspace{5mm}

Observe that the subgraphs induced by $V_2$ and by $V_3$ are each isomorphic
to the line graph of $K_6$.  In the case of $V_2$ this is because two points
in $V_2$ are adjacent if their dot product is $4$.
This occurs when there is exactly one coordinate position where both points
have a $2$.
In the case of $V_3$, two points are adjacent if there is exactly one coordinate
position where both points have a zero.
Next define
an isomorphism $\phi: V_2 \rightarrow V_3$ by letting
\[
\phi(u_1,u_2,u_3,u_4,u_5,u_6) = (2 - u_1, 2 - u_2, 2 - u_3, 2 - u_4, 2 - u_5, 2 - u_6).
\]

Then the edges joining $V_2$ and $V_3$ are given as follows.  A vertex $x \in V_2$ is
adjacent to a vertex $y \in V_3$ whenever $y$ is not adjacent to (or equal to) $\phi(x)$.
This gives the subgraph $H$ of the $\sqrt{8}$-distance graph induced by $V_2 \cup V_3$.

Note that the independence number $\alpha(L(K_6)) = 3$.
However, in $H$ an independent set of size three consisting of vertices from $V_2$ dominates
every vertex in $V_3$.
So $\alpha(H) = 4$.
In fact, it can seen that every maximum independent set can
be obtained from the following independent set by an appropriate permutation of coordinates

\begin{center}
\begin{tabular}{rrrrrr}
(2, & 2, & 0, & 0, & 0, & 0) \\
(0, & 0, & 2, & 2, & 0, & 0) \\
(2, & 0, & 2, & 0, & 2, & 2) \\
(2, & 0, & 0, & 2, & 2, & 2) \\
\end{tabular}
\end{center}

\noindent
Thus we have the following lemma.

\begin{lemma}
The graph $H$ has chromatic number $8$.
\end{lemma}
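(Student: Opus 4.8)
\emph{Plan.} The first step is to translate $H$ into purely combinatorial language. Identify each vertex of $V_2$ with the edge of $K_6$ recording its two $2$-coordinates, and each vertex of $V_3$ with the edge recording its two $0$-coordinates; as already noted, both copies then carry the adjacency of $L(K_6)$ (two edges adjacent iff they share an endpoint). Unwinding the rule ``$x\sim y$ iff $y$ is neither adjacent nor equal to $\phi(x)$'' together with the fact that $\phi(x)$ is the $V_3$-vertex whose zero-set equals the $2$-set of $x$, one finds that a vertex $e\in V_2$ and a vertex $f\in V_3$ are adjacent in $H$ precisely when $e\cap f=\emptyset$. Thus an independent set of $H$ is a pair $(A,B)$ with $A\subseteq V_2$ a matching, $B\subseteq V_3$ a matching, and every edge of $A$ meeting every edge of $B$.

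The lower bound is then immediate. Since $|V(H)|=|V_2|+|V_3|=30$ and, by the discussion preceding the statement, $\alpha(H)=4$, every color class has at most $4$ vertices, so any proper coloring uses at least $\lceil 30/4\rceil = 8$ colors.

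For the upper bound I would exhibit an explicit partition of the $30$ vertices into $8$ independent sets. The structural description above shows that a size-$4$ independent set consists of two disjoint $V_2$-edges $\{a,b\},\{c,d\}$ together with two disjoint $V_3$-edges drawn from the four ``crossing'' edges $\{a,c\},\{a,d\},\{b,c\},\{b,d\}$, all lying inside a common $4$-element ground set. The plan is to use six such size-$4$ classes plus two size-$3$ classes. Concretely, fix a perfect matching $M=\{12,34,56\}$; the remaining $12$ edges split into six disjoint pairs sitting inside the three $4$-sets $\{1,2,3,4\}$, $\{1,2,5,6\}$, $\{3,4,5,6\}$, giving six candidate $V_2$-pairs. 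For each I would select the crossing $V_3$-pair that avoids $M$, which leaves exactly the three edges of $M$ unused in the $V_3$ copy. The final two color classes are then the $V_2$-copy and the $V_3$-copy of $M$, each a matching and hence independent.

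The only real obstacle is making the two partitions compatible: the six crossing choices must \emph{simultaneously} leave a perfect matching as the $V_3$-leftover, so that the last two classes are legitimate. The key observation is that the matching edges $12,34,56$ occur as an avoidable option in every crossing pair; avoiding them forces the $V_3$-leftover to be exactly $M$. Once the eight classes are written down, checking independence of each — two disjoint $V_2$-edges, two disjoint $V_3$-edges, and every $V_2$-edge meeting every $V_3$-edge — is a routine finite verification, which together with the lower bound yields $\chi(H)=8$.
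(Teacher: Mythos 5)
Your proposal is correct. Your combinatorial dictionary checks out: for $x\in V_2$ with $2$-set $e$ and $y\in V_3$ with $0$-set $f$ one computes $\|x-y\|^2=24-8|e\setminus f|$, which equals $8$ exactly when $e\cap f=\emptyset$, matching the paper's $\phi$-based adjacency rule; and independent sets of $H$ are indeed pairs of matchings $(A,B)$ with every edge of $A$ meeting every edge of $B$, which caps $\alpha(H)$ at $4$ (a size-$3$ matching in either copy kills the other copy entirely, since a $2$-element edge cannot meet all three classes of a perfect matching). Your lower bound $\chi(H)\ge\lceil 30/4\rceil=8$ is exactly the paper's implicit argument via $\alpha(H)=4$. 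Where you genuinely go beyond the paper is the upper bound: the paper states the lemma with no explicit coloring (deferring to computation), whereas your plan produces one, and I verified it closes: with $M=\{12,34,56\}$, the six classes pairing the $V_2$-matchings $\{13,24\}$, $\{14,23\}$, $\{15,26\}$, $\{16,25\}$, $\{35,46\}$, $\{36,45\}$ each with the opposite crossing matching of the same $4$-set as $V_3$-edges use every non-$M$ edge exactly once in each copy, so the two leftover classes are precisely the $V_2$- and $V_3$-copies of $M$; every class is independent under the dictionary, giving $\chi(H)\le 8$. One byproduct worth noting: your (correct) classification of maximum independent sets --- the two $V_3$-edges must be \emph{disjoint} crossing edges --- shows the paper's displayed example is misprinted: its last two points $(2,0,2,0,2,2)$ and $(2,0,0,2,2,2)$ have zero-sets $\{2,4\}$ and $\{2,3\}$, which share position $2$ and hence lie at distance $\sqrt{8}$; replacing the fourth point by $(0,2,0,2,2,2)$ (zero-set $\{1,3\}$) restores independence and agrees with your structure theorem.
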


The proof of the following theorem can be completed by a short
computer search that makes strong use of this lemma.

\begin{theorem}
The graph $G$ has chromatic number $12$ and can be
represented as a unit distance graph in $\mathbb{R}^6$.
\end{theorem}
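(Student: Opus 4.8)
The statement has two parts: that $G$ can be realized as a unit-distance graph in $\mathbb{R}^6$, and that $\chi(G)=12$. The first part is immediate from the construction. Every edge of $G$ joins two of the $175$ points at Euclidean distance $r=\sqrt8$, so dividing all coordinates by $\sqrt8$ produces a congruent copy of $G$ in which adjacent points are exactly the pairs at distance $1$. Thus it remains to establish $\chi(G)=12$, which I would do by proving the two inequalities $\chi(G)\le 12$ and $\chi(G)\ge 12$ separately.

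For the upper bound I would exhibit an explicit proper $12$-coloring of the $175$ vertices. Since the coordinate-permutation group $S_6$ acts on $\mathbb{R}^6$ as a group of automorphisms of $G$ (it preserves all pairwise distances and permutes each orbit $V_i$ within itself), it suffices to record the colors on a set of orbit representatives together with the rule by which colors transform under the group, and then recover the whole coloring by the action. Properness is then confirmed by verifying that no edge is monochromatic, a finite computation over the edge set, and a suitable coloring is located by computer.

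The lower bound $\chi(G)\ge 12$ is the heart of the argument, and here the Lemma does the essential work. Any proper coloring of $G$ restricts to a proper coloring of the induced subgraph $H$ on $V_2\cup V_3$, which by the Lemma needs at least $8$ colors; moreover, since $\alpha(H)=4$ and, as noted before the Lemma, every maximum independent set of $H$ is a single $S_6$-image of one explicit $4$-set, the color classes of $H$ are extremely rigid. I would exploit this in a branch-and-bound search: enumerate, up to $S_6$-symmetry and up to relabeling of colors, the proper colorings of $H$ that use at most $11$ colors, and for each such partial coloring extend it to the remaining $145$ vertices $V_1\cup V_4\cup\dots\cup V_{11}$ by constraint propagation, using the known adjacencies (for instance the origin $V_1$ is adjacent to every point of squared norm $8$, namely all of $V_2,V_4,V_5,V_6,V_7$). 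The Lemma certifies a large $8$-chromatic core and pins down the structure of its color classes, so the search branches on only a handful of inequivalent starting configurations and each branch terminates quickly, showing that no extension uses fewer than $12$ colors. Combined with the upper bound this yields $\chi(G)=12$.

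The main obstacle is keeping the lower-bound search genuinely short. A naive attempt to rule out all $11$-colorings of a $175$-vertex graph is hopeless, so the entire difficulty lies in pruning. This is exactly where the Lemma is indispensable: by fixing $\chi(H)=8$ and the rigid permutation structure of the maximum independent sets of $H$, it reduces the number of inequivalent colorings of the core that must be examined to a small list, while the $S_6$-symmetry further collapses the extension step. Verifying that these few cases all fail to extend below $12$ colors is the part I expect to require the most care.
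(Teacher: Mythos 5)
Your proposal matches the paper's approach: the paper gives no details beyond stating that the proof ``can be completed by a short computer search that makes strong use of this lemma,'' and your plan---scale by $\sqrt{8}$ for the unit-distance realization, find a $12$-coloring by computer, and rule out $11$-colorings via a pruned search anchored on $\chi(H)=8$ and the rigid $S_6$-structure of the maximum independent sets of $H$---is precisely that computation, spelled out in more detail than the paper itself provides. Your factual claims (e.g., that $V_1$ is adjacent exactly to $V_2, V_4, V_5, V_6, V_7$, the points of squared norm $8$) check out against the construction.
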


\section{\bf $\chi(\mathbb{R}^7)$}

\begin{theorem}
$\chi(\mathbb{Q}^7)\ge 16$.
\end{theorem}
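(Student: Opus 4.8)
The plan is to establish the bound exactly as in the preceding section: exhibit an explicit finite graph $G$ that embeds as a unit-distance graph in $\mathbb{Q}^7$ and prove $\chi(G)\ge 16$. The one feature distinguishing this case from $\chi(\mathbb{R}^6)$ is the insistence on rational coordinates, so the first thing I would fix is a point set $V\subset\mathbb{Z}^7$ together with a generating distance $r$ that is rational; for integer vectors this forces the common squared edge length to be a perfect square, since after rescaling by $1/r$ the coordinates stay in $\mathbb{Q}^7$ precisely when $r\in\mathbb{Q}$. Working with integer vectors and an $r$-distance graph thus keeps us inside $\mathbb{Q}^7$ automatically, which is the whole reason for stating the result over $\mathbb{Q}$ rather than $\mathbb{R}$.

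For the construction itself I would imitate the $G_{175}$ recipe of Section 2: choose a small number of orbit representatives and let the symmetry group of the ambient integer lattice --- coordinate permutations together with sign changes --- act on them to generate $V$. Taking $V$ to be a single orbit, or a union of a few orbits, of integer vectors of fixed norm, with adjacency defined by a fixed inner product, yields a highly vertex-transitive graph whose structure is governed entirely by the combinatorics of the generating vectors. The natural seven-dimensional candidates are the short vectors of the $D_7$ or $E_7$ root systems, or $\{0,\pm1,\pm2\}$-vectors with a prescribed coordinate pattern; I would search among these for a configuration realizing $\chi\ge16$, checking at each stage that the chosen edge length meets the rationality requirement above.

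To prove the lower bound I would combine two tools. First, the fractional bound $\chi(G)\ge |V|/\alpha(G)$, which reduces matters to showing that the independence number $\alpha(G)$ is small; as in the $\mathbb{R}^6$ case, the maximum independent sets should fall into a single symmetry class, so $\alpha(G)$ can be pinned down by inspecting one representative. If that estimate lands short of $16$, I would fall back on a direct coloring argument: isolate a dense, highly symmetric subgraph $H'\subset G$ --- playing the role the subgraph $H$ with $\chi(H)=8$ plays above --- prove a lemma forcing its colorings to occupy many color classes, and then run a constrained search extending a coloring of $H'$ to all of $G$ to verify that $15$ colors cannot be completed.

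The hard part will be this last step: ruling out every proper $15$-coloring. The vertex count is large enough that naive backtracking is hopeless, so the argument must lean on the automorphism group of $G$, fixing colors on one orbit up to the group action, pruning by the $H'$-lemma, and using isomorph rejection to keep the search tree small. Engineering the construction so that it has \emph{both} a small independence number and enough symmetry to make this search tractable is the real design problem; once such a $V$ is found, everything else is bookkeeping and a short computer verification.
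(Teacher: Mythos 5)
Your proposal is a search plan, not a proof: at no point do you exhibit a concrete point set, so nothing is actually established. Beyond that, the quantitative tools you commit to would not reach $16$ for graphs of the kind you describe. Your primary tool is the fractional bound $\chi(G)\ge |V(G)|/\alpha(G)$, but the paper's graph illustrates why this must fail here: it has $168$ vertices and contains a large near-independent layer (a perfect matching on $112$ vertices), so $\alpha(G)=56$ and the fractional bound gives only $\chi(G)\ge 3$. Your proposed remedy --- a single orbit of fixed-norm integer vectors with small $\alpha$ --- cuts against what the construction actually needs: the paper essentially uses \emph{two} layers at different norms, a set $S$ of $56$ vectors with three $\pm 2$ entries and a set $T$ of $112$ vectors with four $\pm 1$ entries, the supports in each pair $(S_{ijk},T_{ijk})$ being complementary and indexed by the seven lines of the Fano plane, with adjacency at distance $4$. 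The layer $S$ alone gives only $\chi(H)=56/\alpha(H)=56/4=14$; the jump from $14$ to $16$ comes entirely from the interaction between the layers, which no independence-number computation on either piece can see.

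The paper's key idea, absent from your proposal, is a counting argument in place of your backtracking search. One classifies each independent set $M$ of $G$ by its type $k=|M\cap S|\in\{0,\dots,4\}$ and proves (Claim~\ref{C} in the paper) that $|M\cap T|$ is at most $56,24,24,3,3$ for $k=0,1,2,3,4$ respectively --- the crucial feature being the collapse from $24$ to $3$ once three $S$-vertices are used. A hypothetical partition into $15$ color classes with $m_k$ classes of type $k$ must satisfy $m_0+\cdots+m_4=15$, $m_1+2m_2+3m_3+4m_4=56$ (covering $S$) and $56m_0+24m_1+24m_2+3m_3+3m_4\ge 112$ (covering $T$), and this system has no nonnegative integer solution; hence $\chi(G)\ge 16$. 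This replaces the exhaustive elimination of all proper $15$-colorings, which you yourself identify as the intractable step, by a feasibility check on a tiny integer program. So even granting your framework, the missing ideas are (i) the explicit two-layer Fano-plane construction and (ii) the typed counting lemma that converts structural constraints on independent sets into an infeasible linear system; without them your plan stalls exactly where you predict it will.
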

\begin{proof}

Consider the following fourteen sets in $\mathbb{Q}^7$:
\begin{align*}
S_{123}&=[\pm 2, \pm 2, \pm 2, 0,0,0,0],\quad T_{123}=[0,0,0,\pm 1, \pm 1, \pm 1, \pm 1],\\
S_{145}&=[\pm 2, 0,0,\pm 2, \pm 2, 0,0],\quad T_{145}= [0,\pm 1, \pm 1, 0,0, \pm 1, \pm 1],\\
S_{167}&=[\pm 2, 0,0,0,0,\pm 2, \pm 2],\quad T_{167}=[0,\pm 1, \pm 1, \pm 1, \pm 1, 0, 0],\\
S_{247}&=[0, \pm 2, 0, \pm 2,0,0, \pm 2],\quad T_{247}=[\pm 1,0, \pm 1,0, \pm 1, \pm 1, 0],\\ 
S_{256}&=[0,\pm 2, 0,0,\pm 2, \pm 2, 0], \quad T_{256}=[\pm 1,0, \pm 1, \pm 1,0, 0, \pm 1],\\
S_{346}&=[0,0,\pm 2,\pm 2,0,\pm 2,0],\quad T_{346}=[\pm 1, \pm 1,0,0, \pm 1, 0,\pm 1],\\
S_{357}&=[0, 0, \pm 2, 0, \pm 2,0, \pm 2],\quad T_{357}=[\pm 1, \pm 1,0, \pm 1,0, \pm 1, 0].\\.
\end{align*}
Denote
\begin{align*}
S&=S_{123} \cup S_{145} \cup S_{167} \cup S_{247} \cup S_{256} \cup S_{346} \cup S_{357},\\
T&=T_{123} \cup T_{145} \cup T_{167} \cup T_{247} \cup T_{256} \cup T_{346} \cup T_{357}.
\end{align*}

Let $G$ be the graph whose vertices are the points in $S\cup T$. Two vertices
are adjacent if and only if their distance is $4$.
It can be checked
that $G$ has 168 vertices and 4396 edges.
We are going to prove that $\chi(G)=16$.

Let $H$ be the subgraph of $G$ induced by the points in $S$. One can verify that
$H$ is a graph of order $|V(H)|=56$, 
size $|E(H)|= 756$, independence number $\alpha(H)=4$ and chromatic number
$\chi(H)=|V(H)|/\alpha(H)=14$.

Similarly, let $K$ be the subgraph of $G$ induced by the points in $T$.
One can verify that $K$ is a matching of order 
$|V(K)|=112$, size $|E(K)|= 56$, independence number $\alpha(K)=56$ and
chromatic number $\chi(K)=|V(K)|/\alpha(K)=2$.
It follows that $\chi(G)\le \chi(H)+\chi(K) = 14+2 = 16$.

Let $M$ be an independent set of $G$. From the observation above
$|M\cap V(H)|\le 4$. 
We say that $M$ is an {\it{independent set of type $k$}}
if $|M\cap V(H)|=k$ for some $0\le k\le 4$.
The following claim can be easily checked

\begin{claim}\label{C}
Let $M$ be an independent set of type $k$ in $G$. Then, the following hold:

\begin{align*}
\text{If}\quad &k=0,\quad \text{then} \quad |M\cap V(K)|\le 56.\\
\text{If}\quad &k=1,\quad \text{then} \quad |M\cap V(K)|\le 24.\\
\text{If}\quad &k=2,\quad \text{then} \quad |M\cap V(K)|\le 24.\\
\text{If}\quad &k=3,\quad \text{then} \quad |M\cap V(K)|\le 3.\\
\text{If}\quad &k=4,\quad \text{then} \quad |M\cap V(K)|\le 3.\\
\end{align*}
\end{claim}

Suppose that $\chi(G)\le 15$. Then the set of vertices of $G$ can be
partitioned into 15 independent sets.
Denote by $m_k$ the number of independent sets of type $k$ in this
partition, $0\le k\le 4$.
Then from Claim \ref{C}, the following relations hold true:

\begin{align*}
15&=m_0+m_1+m_2+m_3+m_4.\\
56&=m_1+2m_2+3m_3+4m_4.\\
112&\le 56m_0+24m_1+24m_2+3m_3+3m_4.
\end{align*}

But it is easy to check (either by hand or a short program) that this system
has no solutions in nonnegative integers.
Thus $\chi(G)\ge 16$.
\end{proof}

\section{\bf $\chi(\mathbb{R}^{10})$}

The construction for $\chi(\mathbb{R}^{10})$ is related to the well-known
Frankl-Wilson construction \cite{franklwilson} which established
an exponential lower bound for $\chi(\mathbb{R}^{10})$, and also gives
the best constructive lower bound for classical diagonal Ramsey numbers.

The vertices of this graph are identified with the 
points $(x_1 , x_2 , \dots , x_{11})$ in $\mathbb{R}^{11}$ such
that each $x_i = 0 \mbox{ or } 1$, and
\[
\sum\limits_{i=1}^{11} x_i = 5
\]
There are $\binom{11}{5} = 462$ such points.
Two points are adjacent if
their distance is $2$ (and so their Hamming distance is $4$).
This graph is regular of degree $\binom{5}{3} \binom{6}{2} = 150$.

A computation reveals that the independence number
of $G$, denoted as usual by $\alpha(G)$, is $18$.
We did this computation two ways.  First, our own special program
written for graph of this type was used.  Second, the result was
verified by the {\tt mcqd} program of Konc and Jane\v{z}i\v{c}
\cite{match}.
Using the fact that $\chi(G) \geq \frac{n}{\alpha{G}}$, for any
graph $G$ of order $n$, we find that
$\chi(G) \geq \ceil{\frac{462}{18}} = 26$.

Finally, for each point, the sum of the coordinates is $5$,
so the points are located on a $10$-dimensional hyperplane, so
we have the following.

\begin{theorem}
$\chi(\mathbb{R}^{n}) \geq 26$, for $n=10,11$.
\end{theorem}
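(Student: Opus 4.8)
The plan is to realize the combinatorial graph $G$ described above as a genuine unit-distance graph sitting inside a $10$-dimensional Euclidean space, and then to read off the chromatic lower bound from the independence number already computed.

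First I would pin down the geometry. Every vertex is a $0/1$ vector of length $11$ with exactly five ones, so all $462$ vertices lie on the affine hyperplane $\{x : \sum_{i=1}^{11} x_i = 5\}$, which is an isometric copy of $\mathbb{R}^{10}$. If two such vectors are at Hamming distance $4$, then, since both have weight five, they disagree in exactly four coordinates; the squared Euclidean distance equals the number of differing coordinates, namely $4$, so their Euclidean distance is exactly $2$. Thus every edge of $G$ has the same Euclidean length $2$, and all edges correspond precisely to the Hamming-distance-$4$ pairs (the possible squared distances among weight-five vectors are $0,2,4,6,8,10$, so length $2$ is unambiguous). Rescaling the ambient hyperplane by the factor $1/2$ turns $G$ into a unit-distance graph embedded in $\mathbb{R}^{10}$.

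Next I would invoke the elementary inequality $\chi(\mathbb{R}^{10}) \ge \chi(G)$: any proper coloring of $\mathbb{R}^{10}$ that avoids monochromatic pairs at unit distance restricts, on the embedded vertex set, to a proper coloring of $G$. Combining this with the bound $\chi(G) \ge \lceil n/\alpha(G) \rceil = \lceil 462/18 \rceil = 26$ stated above yields $\chi(\mathbb{R}^{10}) \ge 26$.

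Finally, for $n = 11$ I would use that $\mathbb{R}^{10}$ sits as a coordinate subspace of $\mathbb{R}^{11}$, so the same embedded graph is a unit-distance graph in $\mathbb{R}^{11}$ as well; hence $\chi(\mathbb{R}^{11}) \ge \chi(G) \ge 26$ (equivalently, $\chi(\mathbb{R}^n)$ is non-decreasing in $n$). The only genuinely hard step is the determination of $\alpha(G) = 18$, which is the computational heart of the argument and has already been carried out and independently verified above; everything that remains is the routine embedding and monotonicity observations.
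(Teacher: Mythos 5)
Your proposal is correct and follows essentially the same route as the paper: the same Frankl--Wilson-style graph on weight-$5$ binary vectors in $\mathbb{R}^{11}$, the same observation that the vertices lie on the hyperplane $\sum x_i = 5$ (hence in a copy of $\mathbb{R}^{10}$), and the same bound $\chi(G) \geq \ceil{462/18} = 26$ from the computed independence number $\alpha(G) = 18$. Your added details---the rescaling to unit distance, the verification that Hamming distance $4$ forces Euclidean distance exactly $2$, and the explicit monotonicity step for $n = 11$---are routine points the paper leaves implicit.
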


\section{\bf $\chi(\mathbb{R}^{12})$}

The construction for $\mathbb{R}^{12}$ parallels that of $\mathbb{R}^{10}$.
In this case the vertex set consists of all $0-1$-vectors in $\mathbb{R}^{13}$ with Hamming
weight $6$.  Again two vertices are adjacent if their distance is $2$ (Hamming distance $4$).
So the the graph has ordeer $\binom{13}{6} = 1716$  and degree
$\binom{13}{6} \binom{13}{6}$

For this case, computing the independence number is a much harder computation.
But again, both {\tt mcqd} and our specialized program were able to determine that
the independence number is $46$.
Hence the chromatic number is at least $\ceil{\frac{1716}{46}} = 36$

\begin{theorem}
$\chi(\mathbb{R}^{n}) \geq 36$, for $n=12,13,14$.
\end{theorem}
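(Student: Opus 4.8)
The plan is to exhibit a finite unit-distance graph in $\mathbb{R}^{12}$ whose chromatic number is at least $36$, and then promote this bound to dimensions $13$ and $14$ by the monotonicity of $\chi(\mathbb{R}^n)$. Following the $\mathbb{R}^{10}$ construction, I would take $G$ to be the graph on the $\binom{13}{6}=1716$ zero-one vectors in $\mathbb{R}^{13}$ of Hamming weight $6$, with two vectors adjacent exactly when their Euclidean distance is $2$ (equivalently, Hamming distance $4$, i.e. they share exactly $4$ common ones). Since every vertex satisfies $\sum_i x_i = 6$, all $1716$ points lie on a single affine hyperplane, which is isometric to $\mathbb{R}^{12}$; after dividing all coordinates by $2$ the adjacency distance becomes $1$, so $G$ is realized as a unit-distance graph in $\mathbb{R}^{12}$, and hence $\chi(\mathbb{R}^{12}) \geq \chi(G)$.

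The core quantitative step is to pin down the independence number $\alpha(G)$, from which the standard lower bound $\chi(G) \geq \lceil |V(G)|/\alpha(G)\rceil$ delivers the result. I would first try to identify a large independent set by hand — the natural candidates are families of weight-$6$ sets that pairwise avoid sharing exactly $4$ ones, echoing the Frankl--Wilson intersection pattern — to obtain a lower bound on $\alpha(G)$. Matching this with an upper bound is the genuinely hard part: it amounts to solving a maximum-independent-set problem (equivalently, maximum clique in the complement) on $1716$ vertices, well beyond hand computation. Here I would rely on an exact clique solver such as {\tt mcqd}, cross-checked against an independently written specialized routine that exploits the large automorphism group, namely the action of $S_{13}$ permuting coordinates, to prune the search. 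Once the computation returns the value of $\alpha(G)$, the inequality $\lceil 1716/\alpha(G)\rceil \geq 36$ yields $\chi(G) \geq 36$, and the robustness of this last step means the conclusion survives even a small correction to the reported $\alpha$.

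Finally, I would extend the bound to $n = 13$ and $n = 14$ using the elementary fact that $\chi(\mathbb{R}^n)$ is non-decreasing in $n$: any proper coloring of $\mathbb{R}^{n+1}$ restricts to a proper coloring of an isometrically embedded copy of $\mathbb{R}^n$, so $\chi(\mathbb{R}^{n}) \leq \chi(\mathbb{R}^{n+1})$. Applying this twice gives $\chi(\mathbb{R}^{13}), \chi(\mathbb{R}^{14}) \geq \chi(\mathbb{R}^{12}) \geq 36$, settling all three cases at once. The overwhelming difficulty is concentrated in the independence-number certification; the hyperplane embedding and the dimension-monotonicity steps are routine, and the final arithmetic is immediate once $\alpha(G)$ is in hand.
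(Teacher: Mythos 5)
Your proposal follows essentially the same route as the paper: the identical graph on the $\binom{13}{6}=1716$ weight-$6$ zero-one vectors in $\mathbb{R}^{13}$ with adjacency at Hamming distance $4$, the same computational certification of the independence number by \texttt{mcqd} cross-checked against a specialized program (the paper reports $\alpha(G)=46$), the bound $\chi(G)\geq\ceil{1716/\alpha(G)}$, the hyperplane embedding, and dimension monotonicity for $n=13,14$. Incidentally, with the paper's reported value one gets $\ceil{1716/46}=38$, not $36$ as printed, so your cautious remark that the conclusion survives a small correction to $\alpha$ is well taken.
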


\end{document}